\documentclass[11pt]{article}

\usepackage[a4paper,margin=28mm]{geometry}
\usepackage{amsmath,amssymb,amsthm,mathtools}
\usepackage{microtype}
\usepackage{booktabs}
\usepackage{enumitem}
\usepackage{xcolor}
\usepackage[hidelinks]{hyperref}
\hypersetup{
  pdftitle={Random Permutation Matrices Form a Basis with High Probability},
  pdfauthor={Yijun Jiang},
  pdfsubject={A proof of Conjecture 3.4 of Kushwaha and Tripathi}
}
\usepackage[nameinlink,noabbrev]{cleveref}

\newtheorem{theorem}{Theorem}[section]
\newtheorem{proposition}[theorem]{Proposition}
\newtheorem{lemma}[theorem]{Lemma}
\newtheorem{corollary}[theorem]{Corollary}
\theoremstyle{definition}
\newtheorem{definition}[theorem]{Definition}

\newcommand{\F}{\mathbb F}
\newcommand{\Q}{\mathbb Q}
\newcommand{\R}{\mathbb R}
\newcommand{\Pp}{\mathbb P}
\newcommand{\Ee}{\mathbb E}
\newcommand{\one}{\mathbf 1}
\newcommand{\supp}{\operatorname{supp}}

\newcommand{\Span}{\operatorname{span}}

\newcommand{\wt}{\operatorname{wt}}
\newcommand{\eps}{\varepsilon}

\title{\bfseries Random Permutation Matrices Form a Basis\\with High Probability}
\author{Yijun Jiang\\[2mm]
\small School of Mathematical Sciences, Zhejiang University}
\date{}

\begin{document}
\maketitle

\begin{abstract}
Let $d_n=(n-1)^2+1$, the dimension of the real linear span of the $n\times n$ permutation matrices. We prove that $d_n$ independent uniformly random permutation matrices are linearly independent with probability $1-O(n^{-1/2})$. Conditioning on distinctness gives the same conclusion for a uniformly random $d_n$-element subset, thereby confirming a conjecture of Kushwaha and Tripathi. The proof combines three ingredients: a mod-$2$ complexity parameter for assignment functionals, the characteristic-function estimate of Roos in the form recorded by Do--Nguyen--Phan--Tran--Vu, and a kernel decomposition argument of Ferber--Kwan--Sauermann. For the uniform-subset model, we also record the elementary lower bound $\exp(3/2+o(1))n^2e^{-n}$ coming from an unoccupied matrix position.
\end{abstract}

\section{Introduction}

Let $\mathcal P_n$ be the set of all $n\times n$ permutation matrices and put
\[
 d=d_n=(n-1)^2+1.
\]
Kushwaha and Tripathi conjectured that a uniformly random $d$-element subset of $\mathcal P_n$ is linearly independent with probability tending to one \cite[Conjecture~3.4]{KT}. Their numerical data suggest a substantially smaller failure probability, but even convergence to zero was left open.

Our main result is the following quantitative form of their conjecture.

\begin{theorem}[Main theorem]\label{thm:main}
Let $\pi_1,\dots,\pi_d$ be independent uniformly random elements of $S_n$. Then, as $n\to\infty$,
\[
 \Pp\bigl(P_{\pi_1},\dots,P_{\pi_d}\text{ are linearly dependent over }\R\bigr)
 =O(n^{-1/2}).
\]
Consequently, if $\mathcal S_n$ is a uniformly random $d$-element subset of $\mathcal P_n$, then
\[
 \Pp\bigl(\mathcal S_n\text{ is linearly independent}\bigr)
 =1-O(n^{-1/2}).
\]
\end{theorem}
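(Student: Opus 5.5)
The plan is to run the standard random-matrix strategy: reveal the permutation matrices one at a time and bound the probability that the $k$-th lies in the span of the previous ones. Let $V=\Span(\mathcal P_n)$, of dimension $d$. Its annihilator inside the space of $n\times n$ real matrices is spanned by the functionals $X\mapsto \sum_j X_{ij}-\sum_j X_{i'j}$ and the analogous column differences. So a nonzero linear functional on $V$ is an ``assignment functional'' $\pi\mapsto \sum_i a_{i\pi(i)}$, where the matrix $(a_{ij})$ is defined up to adding $u_i+v_j$ with $\sum u_i=\sum v_j$.

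Let $W_k=\Span(P_{\pi_1},\dots,P_{\pi_k})$. If $W_{k-1}\ne V$ and $P_{\pi_k}\in W_{k-1}$, then $P_{\pi_k}$ is annihilated by some nonzero functional $f$ vanishing on $W_{k-1}$, and $f$ is nonconstant on $S_n$. The event of dependence is therefore contained in $\bigcup_k\{P_{\pi_k}\in W_{k-1}\}$. I will bound the probability of each such event via an anti-concentration bound $\sup_c\Pp(f(\pi)=c)$ for a suitable normal vector $f$ of $W_{k-1}$.

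The key parameter is a mod-$2$ complexity $\kappa(f)$. After reducing $a$ modulo the trivial functionals, and after scaling $a$ to have integer entries with odd content, $\kappa(f)$ is the minimum number of rows and columns whose deletion makes $a \bmod 2$ trivial, i.e.\ of the form $u_i+v_j$. The dichotomy I would establish is as follows.

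\textbf{Large complexity.} If $\kappa(f)\ge t$, then the Roos characteristic-function estimate, in the form of Do--Nguyen--Phan--Tran--Vu, gives
\[
\sup_c \Pp(f(\pi)=c)\le \Pp\bigl(f(\pi)\equiv c \pmod 2\bigr)\le \tfrac12+O(e^{-ct}).
\]
More precisely, it gives exponential decay in $t$ when one works with the parity at scale $2^m$ iteratively. Combining this with an iteration over $2$-adic levels should yield $\sup_c\Pp(f(\pi)=c)\le C/\sqrt t$ or better.

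\textbf{Small complexity.} If $\kappa(f)<t$, I would follow Ferber--Kwan--Sauermann. The kernel of $W_{k-1}$ is decomposed, and one shows that it is unlikely ever to contain a low-complexity nonzero functional, except near the very end. Such a functional is determined mod $2$ by $O(tn)$ bits of row/column data, so the number of candidate mod-$2$ patterns is at most $n^{O(t)}2^{O(tn)}$. Each fixed nontrivial pattern is annihilated by a random permutation with probability at most $1-\Omega(1/n)$, or at most a constant when the support is large. A union bound over the $\ge d/2$ independent permutations then kills these patterns for $k\ge d/2$.

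Summing over $k$, the tail steps $k$ near $d$ dominate. There the codimension of $W_{k-1}$ is small, and the dominant obstruction is the ``unoccupied position'' type: a functional supported on a single entry (complexity $O(1)$), for which $\Pp\approx 1-1/n$ per step. Ferber--Kwan--Sauermann's argument is what handles this last stretch, giving the final $O(n^{-1/2})$ from the large-complexity anti-concentration bound at $t\asymp n$.

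\textbf{Conditioning on distinctness.} For the second claim, observe that $\Pp(\pi_i \text{ distinct})\ge 1-d^2/n!\to 1$. Conditioned on distinctness, the set $\{P_{\pi_i}\}$ is a uniform $d$-subset of $\mathcal P_n$, and the failure probability is inflated by at most a factor $1+o(1)$.

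The main obstacle is the anti-concentration step for assignment functionals with only moderately large $\kappa$. Converting a mod-$2$ complexity into a $\sqrt{\kappa}$-type Littlewood--Offord bound for a random permutation, rather than for independent coordinates, is the delicate point. This is why I would lean on the Roos-type estimate, which compares the permutation sum to a sum of independent entries.
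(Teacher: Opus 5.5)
\textbf{There are genuine gaps, starting with the complexity parameter.}

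You measure $\kappa(f)$ by the number of rows and columns whose deletion makes $a \bmod 2$ trivial. This caps $\kappa$ at about $n$, and a large value does not force the bounds you claim. Take $a=\sum_{i\le m}E_{ii}$ with $m=\lfloor\eps n\rfloor$. Its mod-$2$ class needs exactly $m$ deletions. Yet $f(\pi)=\#\{i\le m:\pi(i)=i\}$ is asymptotically Poisson$(\eps)$, so $\Pp(f(\pi)=0)\to e^{-\eps}$ and the parity bias tends to $e^{-2\eps}$. Hence neither $\frac12+O(e^{-ct})$ nor $C/\sqrt t$ holds at $t\asymp n$.

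What controls both the $\F_2$ Fourier coefficient and real anti-concentration is the number of odd cross-differences. The paper therefore defines $\kappa(A)$ as the minimum Hamming weight in the row-column class, which ranges up to $n^2$. It proves $T(A)\ge n^2\kappa(A)$ and takes the threshold $t=\lfloor\eta d\rfloor\asymp n^2$. Roos then gives $\chi(A)\le e^{-\kappa(A)/(8n)}$. For good normals, $N(A)\ge n^2t=\Omega(n^4)$, so a random matching, Azuma on $S_n$ and Erd\H{o}s--Littlewood--Offord give $C\sqrt{n^3/N(A)}=O(n^{-1/2})$. Your ``iteration over $2$-adic levels'' is left unspecified, and with your $\kappa$ it starts from a false mod-$2$ input.

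\textbf{Second, your global bookkeeping does not close.} Summing $\Pp(P_{\pi_k}\in W_{k-1})$ over $k$ with a per-step bound $O(n^{-1/2})$ loses a factor comparable to the number of steps, and you give no codimension-dependent gain. The Ferber--Kwan--Sauermann lemma is not a device for the ``last stretch''. It replaces the sum over $k$ by exchangeability and Markov's inequality. If some left-kernel vector has support at least $t$, then at least $t$ rows lie in the span of the others, so this case costs only $(d/t)\Pp(E_d)$ with $d/t=O(1)$.

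That argument requires separately ruling out sparse left-kernel vectors, i.e.\ short dependencies among the rows. This is \cref{prop:no-sparse-left}: reduce mod $2$ to $\sum_{i\in I}Y_i=0$ and bound $p_s$ by Fourier inversion over $\F_2^{q^2}$. Your plan never addresses this case.

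Finally, your low-complexity union bound is too lossy. Against $n^{O(t)}2^{O(tn)}$ patterns, a survival probability $(1-\Omega(1/n))^{d/2}=e^{-\Omega(n)}$ is not small. The paper instead counts at most $2\binom{n^2}{k}$ functionals of weight-complexity $k$. It uses weight-dependent decay, $r_k\le e^{-k/(32n)}$ for $k\le n$ and $r_k\le\theta$ beyond, so the sum is $e^{-\Omega(n)}$.

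Your distinctness step is fine.
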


The only nonstandard external analytic input is the characteristic-function inequality in \cref{prop:fourier}; we also use the standard Azuma and Erd\H{o}s--Littlewood--Offord inequalities \cite{Azuma,Erdos}. The proof follows the general kernel decomposition in \cite[Lemma~2.1]{FKS}: sparse relations are eliminated by Fourier inversion over $\F_2$, low-complexity right-kernel vectors are eliminated by the same Fourier estimate, and the remaining right-kernel vectors satisfy an elementary $O(n^{-1/2})$ real anti-concentration bound.

All logarithms are natural. Constants implicit in $O(\cdot)$ and $\Omega(\cdot)$ are absolute unless a parameter is displayed.

\section{Coordinates and binary assignment complexity}

\subsection{Coordinates for permutation matrices}

Let $q=n-1$. Every matrix in the linear span of $\mathcal P_n$ has all row sums equal and all column sums equal, with the same common value $t$. Such a matrix is uniquely determined by $t$ and its upper-left $q\times q$ block: indeed,
\begin{align*}
 x_{i n}&=t-\sum_{j\le q}x_{ij}, & i&\le q,\\
 x_{n j}&=t-\sum_{i\le q}x_{ij}, & j&\le q,\\
 x_{nn}&=(2-n)t+\sum_{i,j\le q}x_{ij}.
\end{align*}
Thus the map
\[
 \Phi(X)=\bigl(t,(x_{ij})_{i,j\le q}\bigr)\in\R^{1+q^2}
\]
is injective on this space. Conversely, for $i,j<n$ the matrix
\[
 E_{ij}-E_{in}-E_{nj}+E_{nn}
\]
is the difference of two permutation matrices. These $(n-1)^2$ matrices, together with one permutation matrix, show that the span of $\mathcal P_n$ is exactly the above space and has dimension $d$. For $\pi\in S_n$ define
\[
 Y_\pi:=\Phi(P_\pi)
 =\left(1,\bigl(\one_{\{\pi(i)=j\}}\bigr)_{i,j\le q}\right)\in\{0,1\}^{d}.
\]
Therefore a family of permutation matrices is linearly independent over $\R$ if and only if the corresponding $Y_\pi$ are linearly independent over $\R$.

\subsection{Row-column equivalence over \texorpdfstring{$\F_2$}{F2}}

For $A=(a_{ij})\in\F_2^{n\times n}$ and $\pi\in S_n$, set
\[
 S_A(\pi)=\sum_{i=1}^n a_{i,\pi(i)}\in\F_2.
\]
We declare $A$ and $A'$ to be row-column equivalent if
\[
 a'_{ij}=a_{ij}+r_i+c_j
\]
for some $r,c\in\F_2^n$. Adding such a matrix changes $S_A(\pi)$ only by the constant $\sum_i r_i+\sum_jc_j$. Every equivalence class has a unique representative whose last row and last column are zero; hence the set of matrix classes has cardinality $2^{q^2}$.

A pair $(b,A)\in\F_2\times\F_2^{n\times n}$ defines the affine assignment functional
\[
 \ell_{b,A}(\pi)=b+S_A(\pi).
\]
We declare two pairs $(b,A)$ and $(b',A')$ to be equivalent if, for some $r,c\in\F_2^n$,
\[
 a'_{ij}=a_{ij}+r_i+c_j,
 \qquad
 b'=b+\sum_i r_i+\sum_j c_j.
\]
This equivalence preserves the affine assignment functional: $\ell_{b',A'}=\ell_{b,A}$. Every pair class has a unique representative $(\widetilde b,\widetilde A)$ whose matrix part has zero last row and zero last column. Hence the pair quotient has cardinality $2^{1+q^2}=2^d$, and its canonical representatives are naturally identified with the linear functionals on the coordinate vectors $Y_\pi\in\F_2^d$.

\begin{lemma}[Faithfulness of the quotient]\label{lem:faithful}
If $b+S_A(\pi)=0$ for every $\pi\in S_n$, then $(b,A)$ is the zero functional in the row-column quotient. In particular, a nonzero canonical coordinate vector in $\F_2^d$ does not vanish on every $Y_\pi$.
\end{lemma}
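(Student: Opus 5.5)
First pass to the canonical representative of the pair class. By the preceding discussion, $(b,A)$ is equivalent to a pair $(\widetilde b,\widetilde A)$ whose matrix part has zero last row and zero last column. Equivalence preserves the functional, so $\widetilde b+S_{\widetilde A}(\pi)=0$ for every $\pi$. It therefore suffices to prove the following: if $\widetilde A$ vanishes on row $n$ and column $n$ and $\widetilde b+S_{\widetilde A}\equiv 0$ on $S_n$, then $\widetilde b=0$ and $\widetilde A=0$. The second sentence of the lemma is the same statement, since canonical representatives are identified with linear functionals on $\F_2^d$, and the evaluation of $(\widetilde b,\widetilde A)$ at $Y_\pi$ is exactly $\widetilde b+S_{\widetilde A}(\pi)$.

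Next, evaluate at the identity permutation, or at any permutation fixing $n$. This gives $\widetilde b=\sum_{i\le q}\widetilde a_{i,\pi(i)}$, which ties $\widetilde b$ to the entries of $\widetilde A$.

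The key step uses transpositions involving $n$. Fix $i,j\le q$ and take a permutation $\pi$ with $\pi(i)=j$ and $\pi(n)=n$. Let $\pi'=\pi\circ(i\ n)$, so that $\pi'(i)=n$, $\pi'(n)=j$, and $\pi'$ agrees with $\pi$ elsewhere. Then
\[
S_{\widetilde A}(\pi)-S_{\widetilde A}(\pi')=\widetilde a_{ij}+\widetilde a_{nn}-\widetilde a_{in}-\widetilde a_{nj}=\widetilde a_{ij},
\]
because the last row and last column of $\widetilde A$ are zero. Both values equal $\widetilde b$, so their difference vanishes, and hence $\widetilde a_{ij}=0$ for all $i,j\le q$. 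Thus $\widetilde A=0$. Evaluating at any $\pi$ now gives $\widetilde b=0$.

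This is a direct verification, and the main point needing care is that such a $\pi$ exists for every pair $i,j\le q$. It does, provided $n\ge 2$: take $\pi$ to be the transposition $(j\ \,\pi^{-1}\text{-adjusted})$, or any bijection of $\{1,\dots,q\}$ sending $i$ to $j$, extended by $\pi(n)=n$. The second step is the only place where the normalization of the representative is used, and it reduces the rank-type statement to the four-term alternating sum $a_{ij}+a_{nn}-a_{in}-a_{nj}$.
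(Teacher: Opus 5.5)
Your proof is correct and uses the same mechanism as the paper's proof: a transposition changes $S_A$ by a $2\times 2$ cross-difference, which must therefore vanish. The only difference is the order of operations. You normalize to the canonical representative first, so the cross-differences through row $n$ and column $n$ already give $\widetilde a_{ij}=0$, whereas the paper shows that all cross-differences vanish and then writes $a_{ij}=r_i+c_j$. (Also drop your garbled first suggestion for choosing $\pi$; the second one, any bijection of $\{1,\dots,q\}$ sending $i$ to $j$ extended by $\pi(n)=n$, is all you need.)
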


\begin{proof}
The hypothesis says that $S_A$ is constant. Compare two permutations which differ only by exchanging the columns $k,l$ assigned to two rows $i,j$. We obtain
\[
 a_{ik}+a_{jl}+a_{il}+a_{jk}=0
\]
for every $i,j,k,l$. Hence every $2\times2$ additive cross-difference of $A$ vanishes. Fixing a row and a column then gives $a_{ij}=r_i+c_j$ for suitable $r,c\in\F_2^n$. Moreover,
\[
 S_A(\pi)=\sum_i r_i+\sum_j c_j
\]
for every $\pi$. Since $b+S_A(\pi)=0$, we have $b=\sum_i r_i+\sum_j c_j$. Therefore $(b,A)\sim(0,0)$.
\end{proof}

\begin{definition}[Complexity and odd cross-differences]\label{def:kappa}
For a matrix class $[A]$ define
\[
 \kappa(A)=\min_{A'\sim A}\wt(A'),
\]
where $\wt$ is the number of nonzero entries. Also define
\[
 T(A)=\#\bigl\{(i,j,k,l)\in[n]^4:
 a_{ik}+a_{jk}+a_{il}+a_{jl}=1\bigr\}.
\]
Both quantities are invariant under changing the representative of $[A]$ in the relevant way: $\kappa$ by definition, and $T$ because cross-differences are row-column invariant.
\end{definition}

\begin{lemma}[Complexity forces many odd cross-differences]\label{lem:T-kappa}
For every $A\in\F_2^{n\times n}$,
\[
 T(A)\ge n^2\kappa(A).
\]
\end{lemma}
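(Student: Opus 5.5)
The plan is an averaging argument. For each reference row $j$ and reference column $l$, I would build an explicit representative of $[A]$ whose weight equals the number of odd cross-differences that use row $j$ and column $l$. Summing over all $n^2$ choices of $(j,l)$ then recovers $T(A)$, and each summand is at least $\kappa(A)$.

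\emph{Step 1: the representative.} Fix $(j,l)\in[n]^2$ and define $A^{(j,l)}\in\F_2^{n\times n}$ by
\[
 a^{(j,l)}_{ik}=a_{ik}+a_{jk}+a_{il}+a_{jl}.
\]
This matrix is row-column equivalent to $A$ in the sense of the matrix equivalence defined before \cref{lem:faithful}. Indeed, it has the form $a_{ik}+r_i+c_k$ with $r_i=a_{il}$ and $c_k=a_{jk}+a_{jl}$.

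\emph{Step 2: the lower bound for each term.} By \cref{def:kappa}, $\kappa(A)$ is the minimum weight over the class $[A]$. Since $A^{(j,l)}$ lies in that class, $\wt(A^{(j,l)})\ge\kappa(A)$.

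\emph{Step 3: the double count.} By construction, $\wt(A^{(j,l)})$ is exactly the number of pairs $(i,k)\in[n]^2$ with $a_{ik}+a_{jk}+a_{il}+a_{jl}=1$. Summing over all $(j,l)\in[n]^2$ counts every quadruple $(i,j,k,l)$ in the definition of $T(A)$ exactly once. Combining with Step 2,
\[
 T(A)=\sum_{(j,l)\in[n]^2}\wt\bigl(A^{(j,l)}\bigr)\ge n^2\kappa(A).
\]

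I do not expect a real obstacle. The only point needing care is matching the index pattern in the definition of $T(A)$, namely $a_{ik}+a_{jk}+a_{il}+a_{jl}$, to the roles chosen here: $i$ and $k$ are the free row and column, while $j$ and $l$ are the fixed reference row and column. With this matching the sum counts each quadruple exactly once, with no overcounting factor.
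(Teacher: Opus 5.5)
Your proof is correct, and it reaches the bound by a somewhat cleaner route than the paper's.

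\textbf{The paper's route.} It works one ordered row pair $(i,j)$ at a time. It counts that pair's odd cross-differences as $2h_{ij}(n-h_{ij})$, where $h_{ij}$ is the Hamming distance between the two rows. It bounds this below by $n\delta_{ij}$, with $\delta_{ij}=\min\{h_{ij},n-h_{ij}\}$. It then averages to find a reference row $j_0$ with $\sum_i\delta_{ij_0}\le T(A)/n^2$. Finally, it builds a representative of weight $\sum_i\delta_{ij_0}$ by complementing every row that is far from row $j_0$ and clearing row $j_0$ with column additions.

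\textbf{Your route.} You normalize at a reference row and a reference column at the same time. This makes $\wt(A^{(j,l)})$ exactly the number of odd cross-differences with those reference indices. So $T(A)=\sum_{(j,l)}\wt(A^{(j,l)})$ holds as an identity, and you need neither the Hamming-distance inequality nor the row complementation.

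\textbf{How the two relate.} Fix $j$. Row $i$ of $A^{(j,l)}$ has weight $h_{ij}$ or $n-h_{ij}$, according to whether rows $i$ and $j$ agree in column $l$. Averaging over $l$ gives exactly $2h_{ij}(n-h_{ij})/n$.

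For a fixed reference row, the paper's representative is at least as light as every $A^{(j,l)}$, because it takes the minimum row by row. It pays for this with the lossy step $2h(n-h)\ge n\min\{h,n-h\}$. Both arguments end at the same constant $n^2$.

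Later the paper uses only the final inequality, in \eqref{eq:chi-decay} and \cref{cor:good-normal}, so your shorter version would serve equally well.
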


\begin{proof}
For two rows $i,j$, let $h_{ij}$ be their Hamming distance and put
\[
 \delta_{ij}=\min\{h_{ij},n-h_{ij}\}.
\]
For this ordered row pair, the number of ordered column pairs $(k,l)$ producing an odd cross-difference is
\[
 2h_{ij}(n-h_{ij})\ge n\delta_{ij}.
\]
Consequently,
\[
 T(A)\ge n\sum_{i,j}\delta_{ij}.
\]
Choose $j_0$ such that
\[
 \sum_i\delta_{i j_0}\le \frac{T(A)}{n^2}.
\]
Complement each row when necessary so that its distance from row $j_0$ is $\delta_{i j_0}$, and then use column additions to turn row $j_0$ into the zero row. The resulting row-column equivalent matrix has total weight $\sum_i\delta_{i j_0}$. Thus
\[
 \kappa(A)\le \frac{T(A)}{n^2}.
\]
\end{proof}

\subsection{Fourier decay}

For a real array $A=(a_{ij})$, write
\[
 \Delta_{ij;kl}(A)=a_{ik}-a_{jk}-a_{il}+a_{jl}.
\]
We use the following consequence of a theorem of Roos \cite{Roos}, quoted in the form proved in \cite[Corollary~4.2]{DNPTV}.

\begin{proposition}[Roos--Do--Nguyen--Phan--Tran--Vu]\label{prop:fourier}
Let $\pi$ be uniformly random in $S_n$. For every real $A$ and every $t\in\R$,
\[
 \left|\Ee\exp\left(2\pi i t\sum_i a_{i,\pi(i)}\right)\right|
 \le
 \exp\left(
 -\frac1{2n^3}\sum_{i,j,k,l}
 \bigl\|t\Delta_{ij;kl}(A)\bigr\|_{\R/\mathbb Z}^2
 \right).
\]
\end{proposition}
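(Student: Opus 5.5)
The plan is to treat \cref{prop:fourier} as an external input: it is a direct specialization of Roos's characteristic-function bound for combinatorial sums $\sum_i a_{i,\pi(i)}$, and the present paper only quotes it. The work therefore lies in matching the normalization used in \cite[Corollary~4.2]{DNPTV} with the statement above. There, the bound is phrased through a quantity like $\sum_{i,j,k,l}\|t\,\Delta_{ij;kl}(A)\|^2$, often with a different constant or a factor involving $2\pi$. I will first check that the exponent with constant $\frac{1}{2n^3}$ is implied, possibly by weakening a larger constant, and that replacing $a$ by $ta$ is harmless because the bound is stated for arbitrary real arrays.

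If I had to reprove it from scratch, I would follow the standard Roos/Hoeffding switching route. Write $\phi(A)=\Ee e(\sum_i a_{i\pi(i)})$ with $e(x)=e^{2\pi i x}$. Compose $\pi$ with a uniformly random transposition of two rows $i,j$. This leaves the law of $\pi$ unchanged and multiplies the phase by $e(\Delta)$, where $\Delta=a_{i\pi(j)}+a_{j\pi(i)}-a_{i\pi(i)}-a_{j\pi(j)}$. Averaging over the pair $(i,j)$ shows that $\phi(A)$ equals $\Ee\bigl[e(\sum_i a_{i\pi(i)})\cdot\frac{1}{n^2}\sum_{i,j}\frac{1+e(\Delta_{ij})}{2}\bigr]$. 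The modulus of $\frac{1+e(x)}{2}$ is $|\cos \pi x|\le \exp(-c\|x\|^2)$. Iterating this, or using a Cauchy--Schwarz/decoupling step, converts the average over random $(\pi(i),\pi(j))=(k,l)$ into the average over all $i,j,k,l$, which contributes the factor $n^{-4}$. Carrying roughly $n$ such steps then gives an exponent of order $n\cdot n^{-4}\sum\|\Delta\|^2$, which matches the claimed $n^{-3}$ scaling.

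The main obstacle is the final step of that route: the switchings are correlated with $\pi$, so a naive product bound fails. This is exactly the difficulty Roos handles with a careful martingale or conditional-expansion argument. I would not redo it here, but cite \cite{Roos} and \cite[Corollary~4.2]{DNPTV} verbatim, verifying only the constants.
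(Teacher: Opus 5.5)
Your proposal matches the paper, which likewise gives no proof of \cref{prop:fourier} and simply imports it from Roos \cite{Roos} in the normalization of \cite[Corollary~4.2]{DNPTV}; checking that the constant $\frac{1}{2n^3}$ (after rescaling $A$ by $t$) follows from that corollary is all that is needed. Your switching sketch is only heuristic: the correlation between the switchings and $\pi$ that you flag is exactly why it does not close by itself, so the argument rests on the citation, not on the sketch.
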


For binary $A$, interpreted as an integer matrix, take $t=1/2$. Every odd cross-difference contributes $1/4$ to the exponent. Combining \cref{prop:fourier,lem:T-kappa} gives the key estimate
\begin{equation}\label{eq:chi-decay}
 \chi(A):=\left|\Ee(-1)^{S_A(\pi)}\right|
 \le \exp\left(-\frac{T(A)}{8n^3}\right)
 \le \exp\left(-\frac{\kappa(A)}{8n}\right).
\end{equation}

\section{Sparse dependencies are unlikely}

Let $Y_1,Y_2,\dots$ be independent copies of $Y_\pi$, now viewed in $\F_2^d$, and define
\[
 p_s=\Pp(Y_1+\cdots+Y_s=0).
\]
The first coordinate of every $Y_i$ is $1$, so $p_s=0$ for odd $s$.

\begin{lemma}[Fourier bound for an even xor]\label{lem:ps}
For every even $s$,
\[
 p_s\le
 2^{-q^2}\left(1+e^{-s/(8n)}\right)^{n^2}.
\]
Also, for every $s\ge2$,
\[
 p_s\le \frac1{n!}.
\]
\end{lemma}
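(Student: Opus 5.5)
Both bounds come from a direct computation; the first uses Fourier inversion on $\F_2^d$ together with \eqref{eq:chi-decay}.

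\textbf{First bound.} Write
\[
 p_s=2^{-d}\sum_{u\in\F_2^d}\bigl(\Ee(-1)^{u\cdot Y_\pi}\bigr)^s .
\]
By the identification in the previous section, each $u\in\F_2^d$ is a canonical representative $(b,\widetilde A)$, and $u\cdot Y_\pi=b+S_{\widetilde A}(\pi)$. Hence $\Ee(-1)^{u\cdot Y_\pi}=(-1)^b\,\Ee(-1)^{S_{\widetilde A}(\pi)}$. Since $s$ is even, the sign disappears, and the two values of $b$ give the same term. This gives
\[
 p_s=2^{-d}\cdot 2\sum_{[A]}\chi(A)^s\le 2^{-q^2}\sum_{[A]}e^{-s\kappa(A)/(8n)} .
\]
Here the sum runs over the $2^{q^2}$ matrix classes, and the inequality is \eqref{eq:chi-decay}.

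Next, bound the sum over classes by a sum over all matrices. Each class $[A]$ contains a minimum-weight representative $A'$ with $\wt(A')=\kappa(A)$. Distinct classes have distinct such representatives, so the map $[A]\mapsto A'$ is injective into $\F_2^{n\times n}$. Therefore
\[
 \sum_{[A]}e^{-s\kappa(A)/(8n)}\le\sum_{B\in\F_2^{n\times n}}e^{-s\,\wt(B)/(8n)}=\bigl(1+e^{-s/(8n)}\bigr)^{n^2}.
\]
This is the first claimed bound. The only subtle point is this injection: each class needs one chosen minimum-weight representative, so no counting of class sizes is required.

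\textbf{Second bound.} For odd $s$ we have $p_s=0$, so assume $s\ge2$ is even. Condition on $Y_1,\dots,Y_{s-1}$. The event $Y_1+\cdots+Y_s=0$ then forces $Y_s$ to equal one specific vector $v$.

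The map $\pi\mapsto Y_\pi$ is injective. Indeed, for $i\le q$, the coordinates of $Y_\pi$ in row $i$ determine $\pi(i)$: either $\pi(i)$ is the unique $j\le q$ with a $1$, or $\pi(i)=n$ if there is none. Once $\pi(1),\dots,\pi(q)$ are known, $\pi(n)$ is determined as the remaining value. So at most one $\pi$ satisfies $Y_\pi=v$, and the conditional probability is at most $1/n!$. Averaging over $Y_1,\dots,Y_{s-1}$ gives $p_s\le1/n!$.
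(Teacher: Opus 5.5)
Your proposal is correct and follows the paper's proof essentially verbatim. Both arguments use the same steps: Fourier inversion reduced to a sum of $\chi(A)^s$ over the $2^{q^2}$ matrix classes, the decay bound \eqref{eq:chi-decay}, an injection of classes via chosen minimum-weight representatives, and, for the second bound, conditioning on $Y_1,\dots,Y_{s-1}$ together with injectivity of $\pi\mapsto Y_\pi$. The differences are cosmetic: you invert over $\F_2^d$ and let the constant coordinate supply the factor $2$ that turns $2^{-d}$ into $2^{-q^2}$, you sum over all of $\F_2^{n\times n}$ instead of bounding $N_k\le\binom{n^2}{k}$, and you spell out the injectivity of $\pi\mapsto Y_\pi$, which the paper only asserts.
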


\begin{proof}
For even $s$, Fourier inversion on $\F_2^{q^2}$ gives
\[
 p_s=2^{-q^2}\sum_{[A]}\left(\Ee(-1)^{S_A(\pi)}\right)^s,
\]
where the sum ranges over the $2^{q^2}$ row-column classes. The exponent $s$ is even, so \eqref{eq:chi-decay} applies without a sign issue. If $N_k$ is the number of classes with $\kappa(A)=k$, then $N_k\le\binom{n^2}{k}$: choose, for each class, one minimum-weight representative. Hence
\[
 p_s\le 2^{-q^2}\sum_{k=0}^{n^2}\binom{n^2}{k}e^{-sk/(8n)}
 =2^{-q^2}\left(1+e^{-s/(8n)}\right)^{n^2}.
\]
For the second bound, condition on $Y_1,\dots,Y_{s-1}$. The required value of $Y_s$ is then fixed, and at most one permutation produces it. Since $Y_\pi$ is injectively parametrized by $\pi$, the conditional probability is at most $1/n!$.
\end{proof}

Let
\[
 H(x)=-x\log x-(1-x)\log(1-x)
\]
be the binary entropy. Put
\[
 c_0=\log2-\log(1+e^{-1})>0,
 \qquad
 \theta=\frac{1+e^{-1/8}}2<1.
\]
Fix a sufficiently small absolute constant $\eta>0$ satisfying
\begin{equation}\label{eq:eta-choice}
 H(\eta)<\min\left\{\frac{c_0}{4},-\frac{\log\theta}{4}\right\},
\end{equation}
and, for all sufficiently large $n$, set
\[
 t=\lfloor\eta d\rfloor.
\]

\begin{proposition}[No sparse left-kernel vector]\label{prop:no-sparse-left}
Let $M$ be the $d\times d$ matrix with independent rows $Y_1,\dots,Y_d$, regarded over $\Q$. Then
\[
 \Pp\bigl(\exists\,0\ne u\in\ker(M^T): |\supp(u)|<t\bigr)
 =o(n^{-1/2}).
\]
\end{proposition}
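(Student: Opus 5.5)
The plan is to reduce rational sparse relations to $\F_2$ relations and then take a union bound over supports, using \cref{lem:ps}. Suppose $0\ne u\in\ker(M^T)$ with $|\supp(u)|<t$. Scaling, we may take $u$ integral with coprime entries. Reducing modulo $2$ gives a nonzero $v\in\F_2^d$ with $\supp(v)\subseteq\supp(u)$ and $\sum_{i\in\supp(v)}Y_i=0$ in $\F_2^d$. Hence the event is contained in the union, over nonempty $S\subseteq[d]$ with $|S|=s<t$, of the events $\{\sum_{i\in S}Y_i=0\}$. By exchangeability each such event has probability $p_s$. It therefore suffices to show
\[
 \sum_{2\le s<t,\ s\text{ even}}\binom{d}{s}p_s=o(n^{-1/2}),
\]
since $s=1$ is impossible because $p_1=0$ (first coordinate), and odd $s$ contribute nothing.

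I would split the sum into three ranges of $s$.

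\emph{Small range, $2\le s\le n/4$.} Use $p_s\le 1/n!$ together with $\binom ds\le d^s\le n^{2s}\le n^{n/2}$. Summing over at most $n$ values of $s$ gives at most $n^{1+n/2}/n!=\exp(-\tfrac12 n\log n+O(n))$.

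\emph{Middle range, $n/4\le s\le 8n$.} Use the Fourier bound. We have $e^{-s/(8n)}\le e^{-1/32}$, so
\[
 p_s\le 2^{n^2-q^2}\theta'^{\,n^2}, \qquad \theta'=\tfrac{1+e^{-1/32}}2<1.
\]
This equals $\exp(-c'n^2+O(n))$ for an absolute $c'>0$. Meanwhile $\binom ds\le d^{8n}=\exp(O(n\log n))$, so this range is negligible. The paper's $\theta$ corresponds to the threshold $s\ge n$. If one prefers to use $\theta$ exactly, the first range should instead be cut at a smaller constant multiple of $n$ with the same reasoning; any fixed constant multiple works for the middle range.

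\emph{Large range, $8n\le s<t$.} Now $e^{-s/(8n)}\le e^{-1}$, so
\[
 p_s\le 2^{-q^2}(1+e^{-1})^{n^2}=\exp\bigl(-c_0n^2+O(n)\bigr).
\]
On the other hand $\binom ds\le\exp(H(\eta)d)\le\exp(c_0d/4)$ by \eqref{eq:eta-choice}, since $s<t\le\eta d$ and $\eta<1/2$. There are at most $d$ terms, so the total is $\exp(-\tfrac34c_0n^2+O(n))$.

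All three contributions are $\exp(-\Omega(n\log n))=o(n^{-1/2})$.

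The main obstacle is the transition zone $s\asymp n$. There the trivial $1/n!$ bound is beaten by the $n^{2s}$ count of supports, while the Fourier bound only gains $\exp(-\Omega(ns))$ when $s=o(n)$, since $(1+e^{-x})/2\approx1-x/2$ for small $x$. The point to verify carefully is that the two ranges overlap. The bound $1/n!$ beats $n^{2s}$ as long as $s\le cn$ for a small constant $c$ (any $c<1/2$ suffices asymptotically). For $s\ge cn$ the Fourier bound already gives $\exp(-\Omega_c(n^2))$. So choosing the cut $c=1/4$, or any fixed small constant, closes the gap.
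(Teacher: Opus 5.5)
Your proof is correct and follows the paper's argument. You use the same primitive-integer reduction modulo $2$, the same union bound $\sum_{s<t}\binom ds p_s$, and the same two estimates from \cref{lem:ps} over three ranges of $s$; the only difference is where you cut: you push the trivial $1/n!$ bound up to $s\le n/4$ so that a crude constant $\theta'$ suffices for $n/4\le s\le 8n$, whereas the paper stops the trivial bound at $s\le 100$ and covers $100<s\le 8n$ with the linearization $\log(1+e^{-x})\le\log2-x/3$. Your aside about switching to the paper's $\theta$ is muddled, since that would need the trivial bound up to $s\approx n$, where it fails, but nothing in your argument depends on it.
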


\begin{proof}
If such a rational vector exists, clear denominators and divide by the gcd of its coordinates. Reduction modulo $2$ gives a nonzero vector supported on fewer than $t$ row indices, and therefore a nonempty subset $I\subset[d]$, $|I|<t$, such that
\[
 \sum_{i\in I}Y_i=0\quad\text{in }\F_2^d.
\]
Thus it suffices to show
\[
 \sum_{1\le s<t}\binom ds p_s=o(n^{-1/2}).
\]
Only even $s$ contribute.

For $s\le100$, \cref{lem:ps} gives
\[
 \sum_{s\le100}\binom ds p_s\le \frac{d^{101}}{n!}=o(n^{-1/2}).
\]
For $100<s\le8n$, use
\[
 \log(1+e^{-x})\le\log2-\frac{x}{3},\qquad 0\le x\le1.
\]
Then
\[
 p_s\le \exp\left((2n-1)\log2-\frac{sn}{24}\right).
\]
Together with $\binom ds\le(en^2/s)^s$, this is at most $\exp(-sn/100)$ for all sufficiently large $n$, uniformly in this range.
Consequently,
\[
 \sum_{100<s\le 8n}\binom ds p_s
 \le \sum_{s>100}e^{-sn/100}=e^{-\Omega(n)}.
\]

Finally, for $8n<s<t$, we have
\[
 p_s\le
 \exp\left(-c_0n^2+O(n)\right).
\]
The number of subsets of $[d]$ of size at most $t$ is
\[
 \exp\bigl((H(\eta)+o(1))d\bigr),
\]
which is at most $\exp(c_0n^2/3)$ for all sufficiently large $n$ by \eqref{eq:eta-choice}. The contribution of this range is therefore exponentially small.
\end{proof}

\section{Low-complexity right normals are unlikely}

For $x\in\Q^d\setminus\{0\}$, clear denominators and divide by the gcd to obtain its primitive integer representative, unique up to sign. Reduce this representative modulo $2$, and identify its matrix part with the canonical binary $n\times n$ array whose last row and column are zero. Define $\kappa_2(x)$ to be the complexity of that binary matrix class. This is well-defined under nonzero rational rescaling.

Let $\mathcal G$ be the property
\[
 \mathcal G=\{x\in\Q^d\setminus\{0\}:\kappa_2(x)\ge t\}.
\]

\begin{proposition}[No low-complexity normal]\label{prop:no-bad-normal}
For independent rows $Y_1,\dots,Y_{d-1}$,
\[
 \Pp\left(
 \begin{array}{c}
 \exists\,0\ne x\in\Q^d\setminus\mathcal G\text{ such that}\\
 x\cdot Y_i=0\quad(1\le i\le d-1)
 \end{array}
 \right)
 =o(n^{-1/2}).
\]
\end{proposition}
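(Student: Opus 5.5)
The plan is to reduce to $\F_2$ and take a union bound over binary affine assignment functionals of low complexity, using the decay estimate \eqref{eq:chi-decay}.

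\textbf{Reduction to $\F_2$.} Suppose $0\ne x\in\Q^d\setminus\mathcal G$ satisfies $x\cdot Y_i=0$ for $1\le i\le d-1$. Replace $x$ by its primitive integer representative. Then every relation $x\cdot Y_i=0$ still holds over $\mathbb Z$, and hence modulo $2$. The reduction $w\in\F_2^d$ is nonzero, because $x$ is primitive. Write $w$ as a canonical pair $(b,A)$, whose matrix part has zero last row and zero last column. Then $w\cdot Y_\pi=\ell_{b,A}(\pi)$, and by definition $\kappa(A)=\kappa_2(x)<t$.

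We first show $\kappa(A)\ge1$. If $\kappa(A)=0$, then $A$ is row-column equivalent to $0$, and the canonical representative forces $A=0$. Since $w\neq0$, this gives $w=(1,0)$, so $\ell_{b,A}\equiv1$, which contradicts $w\cdot Y_1=0$. Hence the event in the proposition is contained in the union, over canonical pairs $(b,A)$ with $1\le\kappa(A)<t$, of the events
\[
 \ell_{b,A}(\pi_i)=0 \quad\text{for all } i\le d-1 .
\]

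\textbf{Bound for a fixed pair.} For a fixed pair, independence gives probability
\[
 \Pp\bigl(\ell_{b,A}(\pi)=0\bigr)^{d-1}\le\left(\frac{1+\chi(A)}2\right)^{d-1},
\]
since $\Pp(\ell=0)=\tfrac12\bigl(1\pm\Ee(-1)^{S_A(\pi)}\bigr)$. By \eqref{eq:chi-decay}, this is at most
\[
 \left(\frac{1+e^{-k/(8n)}}{2}\right)^{d-1}, \qquad k=\kappa(A).
\]
As in \cref{lem:ps}, the number of canonical pairs with $\kappa(A)=k$ is at most $2\binom{n^2}{k}$: there are two choices of $b$, and each class has a minimum-weight representative.

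\textbf{Splitting the sum over $k$.} We split at $k=n$.

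For $1\le k<n$, put $u=k/(8n)\le1/8$. We use
\[
 \frac{1+e^{-u}}2\le 1-\frac u3\le e^{-u/3}.
\]
The term is then at most
\[
 2n^{2k}\exp\left(-\frac{k(d-1)}{24n}\right)=2\exp\bigl(k(2\log n-\Omega(n))\bigr).
\]
Summed over $k\ge1$, this is $e^{-\Omega(n)}$.

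For $n\le k<t$, we have $e^{-k/(8n)}\le e^{-1/8}$, so each pair contributes at most $\theta^{d-1}$. The number of such pairs is at most
\[
 2\sum_{k<t}\binom{n^2}{k}\le \exp\bigl((H(\eta)+o(1))d\bigr),
\]
since $t=\lfloor\eta d\rfloor$ and $n^2=d+O(n)$. The condition \eqref{eq:eta-choice}, namely $H(\eta)<-\tfrac14\log\theta$, then makes this range contribute $\exp(-\Omega(n^2))$.

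Both ranges are $o(n^{-1/2})$, which proves the proposition.

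\textbf{Expected main obstacle.} The delicate step is the faithful transfer from rational kernel vectors to binary functionals. One must check that primitivity gives $w\neq0$, and that the constant functional $(1,0)$ is ruled out by the leading coordinate $1$ of each $Y_i$. This is what ensures every surviving class has $\kappa\ge1$. Without it, the $k=0$ term would contribute a non-decaying amount. The other point requiring care is matching the binomial counting to the entropy constraint \eqref{eq:eta-choice}. This is exactly why the range $k\ge n$ is handled with $\theta$ rather than with the linear bound.
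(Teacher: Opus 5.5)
Your proposal is correct and follows the paper's proof essentially step for step. It uses the same primitive reduction modulo $2$, rules out $k=0$ via the constant-one functional, uses the count $2\binom{n^2}{k}$, and splits at $k\approx n$, with a linear exponential bound below the split and the $\theta$/entropy bound via \eqref{eq:eta-choice} above it. The only differences, $e^{-u/3}$ versus the paper's $e^{-u/4}$ and which side of the split $k=n$ falls on, are immaterial.
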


\begin{proof}
An exact rational normal produces, after primitive reduction modulo $2$, a nonzero parity functional that vanishes on every $Y_i$. If its matrix complexity is $k$, then for either choice of its constant term, \eqref{eq:chi-decay} gives
\[
 \Pp(\ell_{b,A}(\pi)=0)
 \le r_k:=\frac{1+e^{-k/(8n)}}2.
\]
The number of parity functionals with matrix complexity $k$ is at most $2\binom{n^2}{k}$. The case $k=0$ cannot occur: the only nonzero canonical functional with zero matrix class is the constant-one functional. Hence the probability in question is at most
\begin{equation}\label{eq:bad-normal-sum}
 2\sum_{k=1}^{t-1}\binom{n^2}{k}r_k^{d-1}.
\end{equation}

For $1\le k\le n$, putting $u=k/(8n)\le1/8$ gives
\[
 r_k=1-\frac{1-e^{-u}}2\le e^{-u/4}=e^{-k/(32n)}.
\]
Since $d-1=(n-1)^2$, the corresponding part of \eqref{eq:bad-normal-sum} is bounded by
\[
 2\sum_{k\le n}\exp\left(k\log\frac{en^2}{k}-\Omega(kn)\right)
 =e^{-\Omega(n)}.
\]
For $n<k<t$, we have $r_k\le\theta$. By the entropy bound and \eqref{eq:eta-choice},
\[
 2\sum_{n<k<t}\binom{n^2}{k}\theta^{d-1}
 \le
 \exp\left(H(\eta)n^2+(d-1)\log\theta+o(n^2)\right)
 =e^{-\Omega(n^2)}.
\]
\end{proof}

\section{Real anti-concentration for assignment statistics}

We shall use the following standard bounded-difference estimate on the symmetric group.

\begin{lemma}[Permutation bounded differences]\label{lem:perm-azuma}
Let $f:S_n\to\R$ satisfy
\[
 |f(\pi)-f(\tau\pi)|\le L
\]
whenever $\tau$ is a transposition of two values. If $\pi$ is uniform in $S_n$, then for every $u>0$,
\[
 \Pp(f(\pi)\le \Ee f(\pi)-u)\le \exp\left(-\frac{u^2}{2nL^2}\right).
\]
\end{lemma}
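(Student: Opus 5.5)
The plan is to use the Doob martingale obtained by exposing the permutation one row at a time, and then apply Azuma's inequality \cite{Azuma}. Define $\mathcal F_k=\sigma(\pi(1),\dots,\pi(k))$ and set $M_k=\Ee[f(\pi)\mid\mathcal F_k]$ for $0\le k\le n$. Then $M_0=\Ee f(\pi)$ and $M_n=f(\pi)$. The value $\pi(n)$ is already determined by the first $n-1$ values, so $M_{n-1}=M_n$. The one-sided tail bound will follow from Azuma once we show $|M_k-M_{k-1}|\le L$ for every $k$. Azuma then gives $\Pp(M_n-M_0\le -u)\le \exp(-u^2/(2nL^2))$, which is the claimed inequality with $n$ martingale steps. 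There is slack here, since there are really only $n-1$ nontrivial steps.

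The key step is the increment bound. Fix the values $\pi(1),\dots,\pi(k-1)$. The quantity $M_{k-1}$ is the average of $g(a):=\Ee[f\mid \pi(1),\dots,\pi(k-1),\pi(k)=a]$ over the available values $a$, so it suffices to show $|g(a)-g(b)|\le L$ for any two available values $a,b$. For this, let $\tau=(a\,b)$ be the transposition of the two values $a$ and $b$. Left multiplication $\sigma\mapsto\tau\sigma$ is a bijection from the permutations extending the prefix with $\pi(k)=a$ onto those extending it with $\pi(k)=b$. Indeed, $a$ and $b$ do not occur among $\pi(1),\dots,\pi(k-1)$, so the earlier values are unchanged by $\tau$. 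Under the uniform measure both conditional distributions are uniform on sets of equal size, so
\[
 g(b)-g(a)=\Ee\bigl[f(\tau\sigma)-f(\sigma)\mid \sigma(1),\dots,\sigma(k-1),\sigma(k)=a\bigr],
\]
and the hypothesis bounds this by $L$ in absolute value. Since $M_k=g(\pi(k))$ and $M_{k-1}$ is an average of values of $g$, we get $|M_k-M_{k-1}|\le\max_{a,b}|g(a)-g(b)|\le L$.

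I expect no serious obstacle. The only point needing care is the coupling: the transposition must act on values (left multiplication), not on positions, so that the already exposed prefix is preserved. This is exactly the form of the hypothesis in \cref{lem:perm-azuma}. Finally, Azuma's inequality in the form $\Pp(M_n-M_0\le-u)\le\exp(-u^2/(2\sum_k c_k^2))$ with $c_k=L$ closes the argument.
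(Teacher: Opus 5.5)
Your proposal is correct and is essentially the paper's own proof. You expose $\pi(1),\dots,\pi(n)$, couple the uniform completions for two candidate values $a,b$ at step $k$ by left multiplication with the value transposition $(a\,b)$, which fixes the exposed prefix, bound the Doob martingale increments by $L$, and apply Azuma; you have simply spelled out the coupling step that the paper states in one line.
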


\begin{proof}
Expose $\pi(1),\dots,\pi(n)$ and take the Doob martingale of $f(\pi)$. For two possible values at one exposure step, the uniform completions can be coupled by transposing those two values. Their conditional expectations therefore differ by at most $L$, so every martingale increment has absolute value at most $L$. Azuma's inequality \cite{Azuma} gives the claim.
\end{proof}

For a real array $A$, define
\[
 N(A)=\#\{(i,j,k,l)\in[n]^4:\Delta_{ij;kl}(A)\ne0\}.
\]

\begin{lemma}[Matching anti-concentration]\label{lem:real-ac}
Let $\pi$ be uniformly random in $S_n$. Then for every real $A$,
\[
 \sup_{z\in\R}\Pp\left(\sum_i a_{i,\pi(i)}=z\right)
 \le
 C\sqrt{\frac{n^3}{N(A)}}
 +\exp\left(-c\frac{N(A)^2}{n^7}\right),
\]
with the convention that the first term is $+\infty$ if $N(A)=0$.
\end{lemma}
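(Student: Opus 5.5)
Set $f(\pi)=\sum_i a_{i,\pi(i)}$. The plan is to find, for a typical $\pi$, many disjoint transpositions each of which changes $f$ by a nonzero amount, and then run the Erd\H{o}s--Littlewood--Offord argument on independent binary choices. The tool for exploiting the randomness is a switching/coupling in the standard form.

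\textbf{Coupling.} Sample $\pi$ uniformly and, independently, a uniformly random perfect matching $\mathcal M$ of $\lfloor n/2\rfloor$ disjoint row pairs $\{i,j\}$, together with independent fair bits $\varepsilon_{ij}$. Let $\pi'$ be obtained from $\pi$ by swapping the values $\pi(i),\pi(j)$ on every pair with $\varepsilon_{ij}=1$. Then $\pi'$ is again uniform. For fixed $(\pi,\mathcal M)$ we can write
\[
f(\pi')=f(\pi)+\sum_{\{i,j\}\in\mathcal M}\varepsilon_{ij}\,\Delta_{ij;\pi(i)\pi(j)}(A),
\]
because each swap changes $f$ by exactly $a_{i\pi(j)}+a_{j\pi(i)}-a_{i\pi(i)}-a_{j\pi(j)}$, which is $\pm\Delta_{ij;\pi(i)\pi(j)}$. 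The swaps act on disjoint rows, so these changes add. Let $m(\pi,\mathcal M)$ be the number of pairs of $\mathcal M$ with nonzero increment. Erd\H{o}s--Littlewood--Offord \cite{Erdos} conditionally gives
\[
\sup_z\Pp(f(\pi')=z\mid\pi,\mathcal M)\le C/\sqrt{m}.
\]
Hence
\[
\sup_z\Pp(f=z)\le C/\sqrt{m_0}+\Pp(m<m_0).
\]

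\textbf{Expectation of $m$.} Conditional on $\mathcal M$, the pair $(\pi(i),\pi(j))$ is a uniform ordered pair of distinct columns. Over a uniform pair $\{i,j\}$, the probability of a nonzero cross-difference is about $N(A)/n^4$, since diagonal quadruples contribute zero anyway. So
\[
\Ee m\approx \tfrac{n}{2}\cdot N(A)/n^4=N(A)/(2n^3).
\]
Take $m_0=\Ee m/2$. This produces the first term $C\sqrt{n^3/N(A)}$.

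\textbf{Concentration of $m$ (the main obstacle).} I would encode $(\pi,\mathcal M)$ by a single random permutation: let $\sigma$ be uniform and pair rows $\sigma(2r-1),\sigma(2r)$. Then $m$ becomes a function of $(\pi,\sigma)$, which is a uniform element of $S_n\times S_n$. A transposition of two values of $\pi$, or of $\sigma$, affects at most two pairs, so $m$ changes by at most $2$. Apply \cref{lem:perm-azuma} twice, exposing $\pi$ and then $\sigma$ through the Doob martingale of the product; equivalently run it on the concatenated $2n$ exposures, each with increment at most $2$. This gives
\[
\Pp(m\le \Ee m-u)\le\exp\bigl(-u^2/(16n)\bigr).
\]
With $u=\Ee m/2\asymp N(A)/n^3$ the exponent is
\[
\asymp N(A)^2/n^{7},
\]
which is exactly the second term. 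The delicate points are two. First, the product-space martingale bound must be justified; \cref{lem:perm-azuma} is stated for a single permutation, so I would either repeat its proof for two independent permutations or embed everything into $S_{2n}$. Second, $\Ee m$ must be computed with the distinctness correction, i.e.\ in terms of $N(A)$ rather than an approximation to it.
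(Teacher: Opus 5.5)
Your proposal is correct and follows essentially the paper's argument: match up the rows, count the matched pairs whose image columns give a nonzero cross-difference (mean at least $N(A)/(2n^3)$, bounded differences with $L=2$, Azuma), and apply Erd\H{o}s--Littlewood--Offord to the orientations within the pairs, which is exactly what your $\varepsilon$-flips realize. The only difference is that the paper first uses averaging over random matchings to fix a deterministic matching $\mathcal M$ carrying at least $N(A)/(4n)$ nonzero unordered cross-differences, so only the single-permutation \cref{lem:perm-azuma} is needed and the product-space martingale never arises (your $2n$-step version with increments at most $2$ is also valid); the distinctness correction is harmless, since $\Delta_{ij;kk}(A)=0$ gives $\Ee m\ge N(A)/(2n^3)$ exactly.
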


\begin{proof}
If $N(A)=0$, the assertion is immediate from the convention in the statement. Assume henceforth that $N(A)>0$.
For an unordered row pair $e=\{i,j\}$, let $g_e$ be the number of unordered column pairs $\{k,l\}$ for which $\Delta_{ij;kl}(A)\ne0$. Then
\[
 N(A)=4\sum_e g_e.
\]
Choose a uniformly random perfect matching of the row set when $n$ is even, and a uniformly random near-perfect matching when $n$ is odd. Every row pair is selected with probability at least $1/n$. Hence there is a fixed matching $\mathcal M$ such that
\[
 \sum_{e\in\mathcal M}g_e\ge \frac{N(A)}{4n}.
\]
For a random permutation $\pi$, let $G$ be the number of pairs $e=\{i,j\}\in\mathcal M$ for which the unordered column pair $\{\pi(i),\pi(j)\}$ is one of the $g_e$ good pairs. Since each unordered image pair is uniform,
\[
 \mu:=\Ee G
 \ge \frac{N(A)}{2n^3}.
\]
Swapping two values of a permutation changes $G$ by at most $2$. Applying \cref{lem:perm-azuma} with $L=2$ and $u=\mu/2$ yields
\[
 \Pp(G<\mu/2)\le \exp\left(-\frac{\mu^2}{32n}\right)
 \le \exp\left(-c\frac{N(A)^2}{n^7}\right).
\]

Now condition on the unordered image pair assigned to every edge of $\mathcal M$ (and on the unmatched image when $n$ is odd). The orientations inside the matched pairs are independent fair choices. For each of the $G$ good pairs, the two orientations produce contributions whose difference is a nonzero cross-difference. Thus, conditionally,
\[
 \sum_i a_{i,\pi(i)}=C_0+\sum_{r=1}^{G}\eps_r b_r,
\]
where the $\eps_r$ are independent Rademacher variables and every $b_r\ne0$. The Erd\H{o}s--Littlewood--Offord inequality \cite{Erdos} gives
\[
 \sup_z\Pp\left(C_0+\sum_{r=1}^{G}\eps_rb_r=z\,\middle|\,\text{unordered pairs}\right)
 \le \frac{C}{\sqrt G}.
\]
On $G\ge\mu/2$ this is at most $C/\sqrt\mu$, and the stated estimate follows.
\end{proof}

\begin{corollary}[Anti-concentration for good normals]\label{cor:good-normal}
Uniformly for $x\in\mathcal G$,
\[
 \Pp(x\cdot Y_\pi=0)=O_\eta(n^{-1/2}).
\]
\end{corollary}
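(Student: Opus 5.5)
Let $x\in\mathcal G$, and let $x=(x_0,X)$ be its primitive integer representative, with $X=(x_{ij})_{i,j\le q}$. I extend $X$ by zeros to an $n\times n$ integer array $A$, setting $a_{in}=a_{nj}=0$. Then
\[
 x\cdot Y_\pi=x_0+\sum_i a_{i,\pi(i)},
\]
so $\Pp(x\cdot Y_\pi=0)$ is bounded by the atom probability $\sup_z\Pp\bigl(\sum_i a_{i,\pi(i)}=z\bigr)$. By \cref{lem:real-ac}, it therefore suffices to show that $N(A)=\Omega_\eta(n^4)$. Granting this, the first term of \cref{lem:real-ac} is $C\sqrt{n^3/N(A)}=O_\eta(n^{-1/2})$, and the second is $\exp(-\Omega_\eta(n))=o(n^{-1/2})$.

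To lower-bound $N(A)$, I pass to parities. If $\Delta_{ij;kl}(A)$ is odd then it is nonzero, so
\[
 N(A)\ge T(\bar A),
\]
where $\bar A$ is $A$ reduced modulo $2$. This reduction is exactly the canonical binary array attached to $x$ in the definition of $\kappa_2$: the primitive representative reduced mod $2$, with its matrix part placed in the upper-left block and zero last row and column. Hence $\kappa(\bar A)=\kappa_2(x)\ge t$. \Cref{lem:T-kappa} then gives
\[
 T(\bar A)\ge n^2\kappa(\bar A)\ge n^2 t=n^2\lfloor\eta d\rfloor\ge \tfrac{\eta}{2}n^4
\]
for large $n$. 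So $N(A)\ge\eta n^4/2$, and plugging this into \cref{lem:real-ac} gives
\[
 \Pp(x\cdot Y_\pi=0)\le C\sqrt{2/\eta}\,n^{-1/2}+\exp\bigl(-c\eta^2 n/4\bigr)=O_\eta(n^{-1/2}),
\]
uniformly in $x\in\mathcal G$. For small $n$ the bound is trivial after adjusting the constant.

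The one point needing care is the identification between the reduction of $A$ and the class used to define $\kappa_2$. The matrix part of $x$ lives in the coordinates $(i,j)\le q$, and the padded array $A$ has zero last row and column both over $\Z$ and mod $2$. Its reduction is therefore literally the canonical representative of that class. Because $T$ is a class invariant, no further normalization is needed. Apart from this bookkeeping the argument is a direct chaining of \cref{lem:T-kappa} with \cref{lem:real-ac}, and I expect no real obstacle.
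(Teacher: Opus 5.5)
Your proof is correct and follows the paper's argument: you pad the primitive integer representative by zeros, observe that odd cross-differences are nonzero so that $N(A)\ge T(\bar A)\ge n^2\kappa_2(x)\ge n^2t=\Omega_\eta(n^4)$ via \cref{lem:T-kappa}, and conclude with \cref{lem:real-ac}. Your explicit handling of the second term $\exp(-c\eta^2n/4)$ and of the identification of $\bar A$ with the canonical class defining $\kappa_2$ spells out steps the paper leaves implicit.
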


\begin{proof}
Take the primitive integer representative of $x$, and extend its $q\times q$ matrix part to an integer $n\times n$ array $A$ by putting zeros in the last row and column. The event $x\cdot Y_\pi=0$ is an event of the form $\sum_i a_{i,\pi(i)}=z$.

Modulo $2$, the matrix class has complexity at least $t$. By \cref{lem:T-kappa}, it has at least $n^2t$ odd cross-differences. Every such integer cross-difference is nonzero, so
\[
 N(A)\ge n^2t=\Omega_\eta(n^4).
\]
Apply \cref{lem:real-ac}.
\end{proof}

\section{Kernel decomposition and proof of the main theorem}

We restate the short general lemma from \cite[Lemma~2.1]{FKS} in the form needed here.

\begin{lemma}[Kernel decomposition]\label{lem:kernel}
Let $R_1,\dots,R_d$ be independent identically distributed random vectors in $\Q^d$, and let $M$ be the matrix with these rows. Let $\mathcal P$ be any property of nonzero vectors in $\Q^d$. For $1\le t\le d$,
\begin{align*}
 \Pp(M\text{ singular})\le{}&
 \Pp\bigl(\exists\,0\ne u\in\ker(M^T):|\supp(u)|<t\bigr)\\
 &+\frac dt\Pp\left(
 \begin{array}{c}
 \exists\,0\ne x\notin\mathcal P:\ x\cdot R_i=0\\
 \text{for }1\le i\le d-1
 \end{array}\right)
 +\frac dt\sup_{x\in\mathcal P}\Pp(x\cdot R_d=0).
\end{align*}
\end{lemma}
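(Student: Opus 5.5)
We follow the argument of \cite[Lemma~2.1]{FKS}. Assume $M$ is singular and that every nonzero $u\in\ker(M^T)$ has $|\supp(u)|\ge t$. We will show that this event forces a certain counting event which, on average, has small probability. For each $i\in[d]$ let $M_{-i}$ be the matrix with rows $R_j$, $j\ne i$, and let $\mathcal E_i$ be the event that $R_i\in\Span\{R_j:j\ne i\}$.

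\emph{Step 1.} On the event under consideration, at least $t$ indices $i$ satisfy $\mathcal E_i$. Indeed, a nonzero $u\in\ker(M^T)$ exists and has at least $t$ nonzero coordinates. For any $i$ with $u_i\ne0$, the relation $\sum_j u_jR_j=0$ expresses $R_i$ as a combination of the other rows. Therefore
\[
 \Pp(M\text{ singular})\le \Pp\bigl(\exists\,0\ne u\in\ker(M^T):|\supp(u)|<t\bigr)+\Pp\Bigl(\sum_i\one_{\mathcal E_i}\ge t\Bigr).
\]
By Markov's inequality and exchangeability of the rows,
\[
 \Pp\Bigl(\sum_i\one_{\mathcal E_i}\ge t\Bigr)\le\frac1t\sum_i\Pp(\mathcal E_i)=\frac dt\,\Pp(\mathcal E_d).
\]

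\emph{Step 2.} It remains to bound $\Pp(\mathcal E_d)$ by the two remaining terms. Because $M_{-d}$ has $d-1$ rows in $\Q^d$, the space $W=\Span\{R_1,\dots,R_{d-1}\}$ has dimension at most $d-1$. So there is a nonzero $x\in\Q^d$ with $x\cdot R_i=0$ for all $i\le d-1$, and $\mathcal E_d$ implies $x\cdot R_d=0$ for every such $x$.

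To make the choice measurable, fix a deterministic rule, for instance the first basis vector of $W^\perp$ in reduced row echelon form, and write $x^*=x^*(R_1,\dots,R_{d-1})$ for it. This $x^*$ is independent of $R_d$. We split according to whether $x^*\in\mathcal P$.
\begin{itemize}
\item If $x^*\notin\mathcal P$, we are inside the event in the second term, since a nonzero normal outside $\mathcal P$ exists.
\item If $x^*\in\mathcal P$, we condition on $R_1,\dots,R_{d-1}$. Independence then gives $\Pp(x^*\cdot R_d=0\mid R_{<d})\le\sup_{x\in\mathcal P}\Pp(x\cdot R_d=0)$.
\end{itemize}
Adding the two cases yields
\[
 \Pp(\mathcal E_d)\le \Pp(\exists\,0\ne x\notin\mathcal P:\ x\cdot R_i=0,\ i<d)+\sup_{x\in\mathcal P}\Pp(x\cdot R_d=0).
\]
Multiplying by $d/t$ and combining with Step 1 gives the lemma.

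The only delicate point is the measurable choice of $x^*$ that keeps it independent of $R_d$. This is handled by the deterministic selection rule, and nothing else in the argument is substantive.
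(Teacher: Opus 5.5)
Your proposal is correct and follows the paper's proof essentially step for step. Both arguments use the counting variable $\sum_i\one_{\mathcal E_i}$ with Markov's inequality and exchangeability, and then choose a normal to $R_1,\dots,R_{d-1}$ deterministically, splitting on membership in $\mathcal P$ and using independence from $R_d$ in the second case.
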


\begin{proof}
Let $E_i$ be the event that $R_i$ lies in the span of the other rows, and let $X=\sum_i\one_{E_i}$. If there is a left-kernel vector with support at least $t$, then every row indexed by its support lies in the span of the others, so $X\ge t$. By exchangeability and Markov's inequality,
\[
 \Pp(X\ge t)\le \frac{\Ee X}{t}=\frac dt\Pp(E_d).
\]
Choose a nonzero rational normal $x$ to $R_1,\dots,R_{d-1}$, measurably as a function of these rows. On $E_d$ it also annihilates $R_d$. Split according to whether $x$ has property $\mathcal P$, and use the independence of $x$ and $R_d$ in the latter case.
\end{proof}

\begin{proof}[Proof of \Cref{thm:main}]
Apply \cref{lem:kernel} over $\Q$ with the rows $Y_1,\dots,Y_d$, the threshold $t=\lfloor\eta d\rfloor$, and the property $\mathcal P=\mathcal G$.

The first term is $o(n^{-1/2})$ by \cref{prop:no-sparse-left}. The second is $o(n^{-1/2})$ by \cref{prop:no-bad-normal}. Since $d/t=O_\eta(1)$, the third is $O_\eta(n^{-1/2})$ by \cref{cor:good-normal}. Hence
\[
 \Pp(M\text{ singular})=O(n^{-1/2}).
\]
Because $M$ has rational entries, singularity over $\R$ and over $\Q$ are equivalent. The coordinate map $\Phi$ is injective on the span of permutation matrices, so this proves the first assertion.

Let $D$ be the event that $\pi_1,\dots,\pi_d$ are all distinct, and let $F$ be the event of linear dependence. We have
\[
 \Pp(D^c)\le \binom d2\frac1{n!}
 =O\left(\frac{n^4}{n!}\right)=o(n^{-1/2}),
\]
and therefore
\[
 \Pp(F\mid D)\le \frac{\Pp(F)}{\Pp(D)}=O(n^{-1/2}).
\]
Conditioned on $D$, the unordered collection $\{P_{\pi_1},\dots,P_{\pi_d}\}$ is a uniformly random $d$-element subset of $\mathcal P_n$. This proves the second assertion.
\end{proof}

\section{The empty-position obstruction}

The bound in \cref{thm:main} is far from the numerical scale observed in \cite{KT}. There is a simple exponentially small obstruction which appears to explain their data.

\begin{proposition}[Empty-position lower bound]\label{prop:empty}
For a uniformly random $d$-element subset $\mathcal S_n\subset\mathcal P_n$,
\[
 \Pp(\mathcal S_n\text{ is linearly dependent})
 \ge \exp(3/2+o(1))n^2e^{-n}.
\]
More precisely, the probability that at least one matrix position is unoccupied by every member of $\mathcal S_n$ is
\[
 (1+o(1))n^2\left(1-\frac1n\right)^d
 =\exp(3/2+o(1))n^2e^{-n}.
\]
\end{proposition}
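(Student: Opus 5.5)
First I will check that an unoccupied position forces dependence. If position $(i,j)$ is unoccupied by every member of $\mathcal S_n$, then all $d$ matrices lie in the subspace of the span of $\mathcal P_n$ cut out by $x_{ij}=0$. This subspace is proper, because some permutation matrix has a $1$ at $(i,j)$. So it has dimension at most $d-1$, and $d$ vectors in it are dependent. The lower bound then follows from the second, asymptotic statement, so the work is to compute the probability of the union $\bigcup_{(i,j)}U_{ij}$, where $U_{ij}$ is the event that position $(i,j)$ is empty.

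I will use Bonferroni inequalities: $\sum\Pp(U_{ij})-\sum_{\text{pairs}}\Pp(U_{ij}\cap U_{i'j'})\le\Pp(\bigcup U_{ij})\le\sum\Pp(U_{ij})$. The set of permutations avoiding $(i,j)$ has size $(1-1/n)n!$. Since $\mathcal S_n$ is a uniform $d$-subset,
\[
\Pp(U_{ij})=\binom{(1-1/n)n!}{d}\Big/\binom{n!}{d}=\prod_{m=0}^{d-1}\frac{(1-1/n)n!-m}{n!-m}.
\]
Since $d^2/n!\to0$, this equals $(1-1/n)^d(1+O(d^2/n!))=(1+o(1))(1-1/n)^d$. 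Equivalently, one can compare with the i.i.d.\ model: the probability that $d$ independent draws are not distinct is $O(d^2/n!)$. This gives first-order sum $(1+o(1))n^2(1-1/n)^d$.

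For the pair terms, the fraction of permutations avoiding two given positions is at most $1-2/n+O(1/n^2)$. This uses inclusion-exclusion on permutations: if the positions share a row or column, the fraction is exactly $1-2/n$; otherwise it is $1-2/n+1/(n(n-1))$. Hence $\Pp(U_{ij}\cap U_{i'j'})\le(1+o(1))(1-2/n+O(n^{-2}))^d$. Summing over $O(n^4)$ pairs, the pair terms total at most
\[
O\bigl(n^4 e^{-2d/n+O(d/n^2)}\bigr)=O\bigl(n^4e^{-2n}\bigr),
\]
which is $o(n^2e^{-n})$. So the union probability is $(1+o(1))n^2(1-1/n)^d$.

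The last step is the asymptotic evaluation, which I expect to be the only delicate point because the constant $e^{3/2}$ must come out exactly. Expand $d\log(1-1/n)=-d/n-d/(2n^2)-O(d/n^3)$ with $d=n^2-2n+2$. This gives $-d/n=-n+2-2/n$ and $-d/(2n^2)=-1/2+O(1/n)$. So the exponent is $-n+3/2+o(1)$, and $n^2(1-1/n)^d=\exp(3/2+o(1))n^2e^{-n}$. Combined with the dependence observation, this gives the claimed lower bound.
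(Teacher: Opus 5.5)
Your proposal is correct and follows the paper's proof essentially step for step. Both use the proper hyperplane $\{x_{ij}=0\}$ to force dependence, compare the hypergeometric probability to $(1-1/n)^d$ via $d^2/n!=o(1)$, bound each pair term by a factor of at most $1-2/n+1/(n(n-1))$ to get an $O(n^4e^{-2n})$ Bonferroni correction, and finish with the expansion $d\log(1-1/n)=-n+3/2+o(1)$.
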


\begin{proof}
Put $N=n!$. For a fixed position $(i,j)$, exactly $(n-1)!=N/n$ permutation matrices occupy it. Thus the probability that the position is empty is
\[
 \frac{\binom{N-(n-1)!}{d}}{\binom Nd}
 =(1+o(1))\left(1-\frac1n\right)^d,
\]
because $d^2/N=o(1)$. For two distinct positions, the fraction of permutations avoiding both is at most
\[
 1-\frac2n+\frac1{n(n-1)}.
\]
Hence the sum of all pairwise intersection probabilities is $O(n^4e^{-2n})$, negligible compared with $n^2e^{-n}$. Bonferroni's inequalities give the first asymptotic formula. Finally,
\[
 d\log\left(1-\frac1n\right)=-n+\frac32+o(1).
\]
Let $V=\Span_{\R}\mathcal P_n$, so $\dim V=d$. If the position $(i,j)$ is empty, all $d$ selected matrices lie in
\[
 H_{ij}:=\{X\in V:x_{ij}=0\}.
\]
This is a proper hyperplane of $V$, since some permutation matrix has a $1$ in position $(i,j)$. Thus $\dim H_{ij}=d-1$, and the selected matrices are linearly dependent.
\end{proof}

The data of \cite[Table~2]{KT} are already close to the first-order empty-position approximation:
\begin{center}
\begin{tabular}{@{}ccc@{}}
\toprule
$n$ & empirical dependence probability & $n^2(1-1/n)^d$\\
\midrule
$8$  & $0.0852661$ & $0.0806460$\\
$10$ & $0.0177500$ & $0.0176964$\\
$12$ & $0.0035968$ & $0.0035331$\\
$14$ & $0.0006322$ & $0.0006620$\\
$16$ & $0.0001163$ & $0.0001185$\\
\bottomrule
\end{tabular}
\end{center}
This suggests the sharper open problem
\[
 \Pp(\mathcal S_n\text{ is linearly dependent})
 \sim n^2\left(1-\frac1n\right)^d,
\]
which is not addressed by the present proof.

\section*{Disclosure}
AI tools were used substantially in exploring possible proof strategies and checking intermediate arguments and calculations. The author's contributions included guiding and organizing the investigation, refining the definition of the pair quotient, reformulating the case analysis into the three-region classification used in the proof, formulating the sharper conjecture stated in the final section, and drafting this manuscript. The author takes full responsibility for the mathematical content and presentation.

\end{document}